\newtheorem{theorem}{Theorem}[section]
\newtheorem{proposition}[theorem]{Proposition}
\def\GL{\mathrm{GL}}
\def\C{\mathbf{C}}
\def\Z{\mathbf{Z}}
\def\kk{\mathbf{k}}
\def\Q{\mathbf{Q}}
\def\C{\mathbf{C}}
\def\Id{\mathrm{Id}}
\def\onto{\twoheadrightarrow}
\def\met1{\mathbf{Met_1}}
\def\pmet1{\mathbf{PMet_1}}
\def\smet1{\mathbf{sMet_1}}
\def\spmet1{\mathbf{sPMet_1}}
\def\nabla{\triangledown}
\def\gr{\mathrm{gr}\,}
\DeclareMathOperator{\Lie}{Lie}
\date{March 25, 2023}
\begin{document}
\centerline{}

\title{The Malcev completion of complex braid groups}
\author[I.~Marin]{Ivan Marin}
\address{LAMFA, UMR CNRS 7352, Universit\'e de Picardie-Jules Verne, Amiens, France, and IMJ-PRG, UMR CNRS 7586, Paris.}
\email{ivan.marin@u-picardie.fr}
\email{marin@imj-prg.fr}

\medskip

\begin{abstract} In this short note we provide an alternative proof of a theorem of Kapovich and Millson about the Malcev completion of an arbitrary Artin group, and determine the
Malcev completion of the braid group of an irreducible finite complex
reflection group.
\end{abstract}

\maketitle

\tableofcontents

\section{Introduction and main results}

We refer the reader to \cite{VKBOOK} Ch. 12 for a modern account of the definition of the Malcev completion $G \otimes \Q$
of a group $G$
and its main properties, some of which will be recalled in Section \ref{sect:prelims}.

Let $S$ be a finite set of vertices and $\Gamma$ be a labelled graph on $S$, determined by
a symmetric Coxeter matrix $(m_{s,t})_{s,t\in S}$ with coefficients in $\{2,3,\dots,\infty\}$.
The Artin group $B = A(\Gamma)$ attached to it is defined by the presentation
with generators $S$ and relations
$$
\underbrace{sts\dots}_{m_{st}}
= 
\underbrace{tst\dots}_{m_{st}}
$$
for $s,t \in S$. It admits a natural epimorphism $A(\Gamma)\onto W(\Gamma)$
with $W(\Gamma)$ the Coxeter group attached to the same Coxeter matrix.

Evidence are given in \cite{RESNIL} that the Kernel $P(\Gamma)$ of $A(\Gamma) \onto W(\Gamma)$
should be residually torsion-free nilpotent, which is equivalent to saying
that it embeds into its Maltev completion $P(\Gamma) \otimes \Q$. More
precisely, it is shown in \cite{RESNIL} that, if the Paris representation of $A(\Gamma)$
is faithful -- which is the case in a number of cases -- then $P(\Gamma)$
is residually torsion-free nilpotent. By contrast, 
the Malcev completion
of $A(\Gamma)$ is quite poor. It has been determined by Kapovich and Millson in \cite{KAPO}. This
quite early reference has been communicated to us by A. Suciu, after we found our proof independently.
Since our proof is fairly direct and uses somewhat different tools than the original
one, we present it in this note. Some of its stages will moreover be used in order to extend these results to the braid groups of (non-real)
reflection groups.

We first start by a limited statement. The Artin group $A(\Gamma)$
is said to be \emph{free of infinity} if $m_{st} \neq \infty$ for every $s,t\in S$,
and $G^{ab}$ denotes the abelianization of the group $G$.

\begin{theorem} \label{theo1}
Let $B = A(\Gamma)$ be an Artin group which is free of infinity. Then the natural morphism $B \otimes\Q \onto B^{ab} \otimes \Q$ is an isomorphism.
\end{theorem}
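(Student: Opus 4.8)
The plan is to deduce everything from the vanishing of the degree-$2$ part of the associated graded Lie ring $\gr(B)=\bigoplus_{n\geq 1}\gamma_n(B)/\gamma_{n+1}(B)$ after tensoring with $\Q$. Since the bracket on $\gr(B)$ is induced by the group commutator and each $\gamma_n(B)/\gamma_{n+1}(B)$ is generated by the classes of left-normed $n$-fold commutators of elements of $B$, the Lie ring $\gr(B)$ is generated by $\gr_1(B)=B^{ab}$; hence so is $\gr(B)\otimes\Q$, and in particular $\gr_n(B)\otimes\Q=[\gr_1(B)\otimes\Q,\gr_{n-1}(B)\otimes\Q]$ for $n\geq 2$. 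Therefore, if $\gr_2(B)\otimes\Q=0$, an immediate induction gives $\gr_n(B)\otimes\Q=0$ for all $n\geq 2$, so that the Malcev Lie algebra of $B$ is abelian and canonically isomorphic to $B^{ab}\otimes\Q$, and the natural map $B\otimes\Q \onto B^{ab}\otimes\Q$ is an isomorphism. (This reduction — that $B\otimes\Q\to B^{ab}\otimes\Q$ is an isomorphism as soon as $\gr_2(B)\otimes\Q=0$ — is nothing but the standard behaviour of the Malcev completion with respect to the lower central series, recalled in Section \ref{sect:prelims}.) Thus it suffices to prove that $[\bar s,\bar t]=0$ in $\gr_2(B)\otimes\Q$ for every pair $s,t\in S$, where $\bar s$ denotes the image of $s$ in $B^{ab}$; note that these brackets span $\gr_2(B)\otimes\Q$.

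To prove this I would go through the pairs $\{s,t\}$ one at a time, using the braid relation of each pair — which exists precisely because $B$ is free of infinity, this being the one and only place the hypothesis enters (and it cannot be dropped, since a pair of free generators contributes a nonzero bracket to $\gr_2$). If $m_{st}$ is odd, abelianizing $\underbrace{sts\cdots}_{m_{st}}=\underbrace{tst\cdots}_{m_{st}}$ yields $\bar s=\bar t$ in $B^{ab}$, whence $[\bar s,\bar t]=[\bar s,\bar s]=0$ already in $\gr_2(B)$. If $m_{st}=2k$ is even, the relation reads $(st)^k=(ts)^k$, and from the trivial identity $s(ts)^j=(st)^j s$ (immediate induction on $j$) one gets $s(st)^k s^{-1}=s(ts)^k s^{-1}=(st)^k$, so $(st)^k$ commutes with $s$ in $B$. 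Feeding the commutator $[(st)^k,s]=1$ into the bilinear commutator map $B\times B\to\gr_2(B)$ and using that the image of $(st)^k$ in $\gr_1(B)=B^{ab}$ is $k\bar s+k\bar t$, we obtain $0=[k\bar s+k\bar t,\bar s]=k\,[\bar t,\bar s]$ in $\gr_2(B)$, hence $[\bar s,\bar t]=0$ in $\gr_2(B)\otimes\Q$ since $k\neq 0$. (For $m_{st}=2$ this is just the commutation relation.)

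Combining the two cases, every spanning bracket of $\gr_2(B)\otimes\Q$ vanishes, so $\gr_2(B)\otimes\Q=0$ and the theorem follows from the first paragraph. One can equally phrase the outcome integrally: the computation shows that $\gamma_2(B)/\gamma_3(B)$ is a finitely generated abelian group annihilated by $\mathrm{lcm}\{\,m_{st}/2 : m_{st}\ \text{even}\,\}$, hence finite, which is all that is needed. I do not anticipate a genuine obstacle for this statement: the two relation computations are each one line, and the only point that requires care is the passage in the first paragraph from "$\gr_2(B)\otimes\Q=0$" to "$B\otimes\Q\to B^{ab}\otimes\Q$ is an isomorphism", for which one relies on the properties of $\cdot\otimes\Q$ assembled in Section \ref{sect:prelims}.
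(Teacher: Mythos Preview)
Your proof is correct, and the overall architecture---reduce to $\gr_2(B)\otimes\Q=0$ by showing each bracket $[\bar s,\bar t]$ is torsion---is the same as the paper's. The genuine difference is in how the even case is handled. The paper proves Proposition~\ref{prop:dihedral} (that $C^2 B_0/C^3 B_0\simeq \Z/(e/2)\Z$ for the dihedral group $B_0$ of type $I_2(e)$) by a homological computation: it builds the Dehornoy--Lafont complex, computes the induced map $H_2(B_0,\Z)\to H_2(B_0^{ab},\Z)$ explicitly as multiplication by $e/2$, and reads off the result from the Stallings--Stammbach exact sequence, before pushing into $B$ via the parabolic map. You instead observe directly inside $B$ that $(st)^k$ commutes with $s$ when $m_{st}=2k$, which immediately gives $k[\bar t,\bar s]=0$ in $\gr_2 B$. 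The paper itself remarks that ``it is probably possible to prove this proposition by a direct group-theoretic argument''; your computation is exactly such an argument, and it is more elementary. What the homological route buys is the precise module structure $\Z/(e/2)\Z$ (you only get that $k$ annihilates the bracket, not that the order is exactly $k$), and its machinery---in particular the Stallings--Stammbach sequence---is reused later in the paper for the $G(de,e,n)$ case of Theorem~\ref{theo2}.
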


The abelianization of $B$ is easy to compute from the presentation itself : abelianizing
the presentation makes all relations corresponding to even labels vanish, and the remaining ones
identify the generators involved. Combinatorially, this can be described as follows.
 Considering the
coarsest partition (that is, with the smallest number of parts) of
$S$ as $\bigsqcup_{i=1}^r S_i$ such that, for every two $s_i \in S_i, s_j \in S_j$ with $i \neq j$
we have that $m_{s_i,s_j}$ is even, we get that every two $s,t$ in the same $S_i$ can be
connected by a path with odd labels, so that they are conjugates inside $B$. From this one
readily gets that $B^{ab} \simeq \Z^r$. The above result thus implies $B\otimes \Q \simeq \Q^r$

In the general case, we build a new Artin group attached to a Coxeter graph
$\overline{\Gamma}$ with vertices the $S_i,i=1,\dots,r$ and $\bar{m}_{i,j} = m_{S_i,S_j} = 2$
if we have $m_{s,t} < \infty$ for some $s \in S_i$, $t \in S_j$, and $\bar{m}_{i,j} = \infty$
otherwise. Then $A(\overline{\Gamma})$ is a right-angled Artin group (RAAG).

We have a surjective homomorphism $A(\Gamma) \to A(\overline{\Gamma})$ with maps
each $s \in S_i$ to the generator $S_i$ of $A(\overline{\Gamma})$. Indeed, if $s \in S_i$
and $t \in S_j$ satisfy $i \neq j$, then either $m_{s,t} = 2m$ for some integer $m$,
in which case $S_i S_j = S_j S_i$ and $(st)^m$ and $(ts)^m$ are both mapped
to $(S_i S_j)^m= (S_j S_i)^m$, or $m_{s,t} =\infty$ and there is nothing to check.
This induces an homomorphism $A(\Gamma) \otimes \Q \to A(\overline{\Gamma}) \otimes \Q$.
The full result of Kapovich and Millson is then the following one.

\begin{theorem}
\label{theo1b}
Let $\Gamma$ be an arbitrary Coxeter graph. Then the morphism $A(\Gamma) \otimes \Q \to A(\overline{\Gamma}) \otimes \Q$ is an isomorphism.
\end{theorem}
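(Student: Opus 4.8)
The plan is to argue by induction on the number $|S|$ of vertices, with Theorem \ref{theo1} furnishing the base case and the behaviour of the Malcev completion under amalgamated products providing the inductive engine. Recall that $G\mapsto G\otimes\Q$ is, by its universal property, the left adjoint of the inclusion of Malcev-complete (pro-unipotent) groups into groups; in particular it preserves colimits, so it carries an amalgamated product $G_1*_{G_0}G_2$ of groups to the pushout $(G_1\otimes\Q)*_{G_0\otimes\Q}(G_2\otimes\Q)$ in Malcev-complete groups. For the base case, if $\Gamma$ is free of infinity then any two distinct parts $S_i,S_j$ are joined by a finite label, so $\overline\Gamma$ is a complete graph, $A(\overline\Gamma)=\Z^r=A(\Gamma)^{ab}$, and the morphism $A(\Gamma)\onto A(\overline\Gamma)$ is, up to this identification, the abelianization; Theorem \ref{theo1} then says it becomes an isomorphism after $-\otimes\Q$.

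For the inductive step assume $\Gamma$ is not free of infinity. Choosing $s_0,t_0$ with $m_{s_0t_0}=\infty$ and setting $C=S\setminus\{s_0,t_0\}$ (or $C=\emptyset$ when the graph of finite labels is already disconnected), the subgraph induced on $S\setminus C$ is disconnected, say $S\setminus C=V_1\sqcup V_2$ with all labels between $V_1$ and $V_2$ infinite. With $\Gamma^{(j)}$ the full subgraph on $V_j\cup C$ and $\Gamma^{(0)}=\Gamma|_C$, the presentation of $A(\Gamma)$ splits at once as $A(\Gamma)=A(\Gamma^{(1)})*_{A(\Gamma^{(0)})}A(\Gamma^{(2)})$, and each $\Gamma^{(j)}$ has fewer than $|S|$ vertices, so the induction hypothesis applies to them. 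The crucial point is that passing to the bar is compatible with this decomposition: the natural maps $A(\Gamma^{(j)})\to A(\overline{\Gamma^{(j)}})$ commute with the amalgamation maps, and $A(\overline\Gamma)$ is itself the amalgamated product $A(\overline{\Gamma^{(1)}})*_{A(\overline{\Gamma^{(0)}})}A(\overline{\Gamma^{(2)}})$. Indeed, every commutation relator of the RAAG $A(\overline\Gamma)$ comes from a finite label of $\Gamma$, which must lie entirely in $V_1\cup C$ or in $V_2\cup C$; and whenever two vertices of $\Gamma$ are identified in $A(\overline\Gamma)$, an odd path realizing the identification can be cut along its visits to $C$ into segments each contained in one of the two sides, so this identification is a composite of identifications seen on a single side and already holds in the amalgamated product, whose amalgamating subgroup $A(\overline{\Gamma^{(0)}})$ accounts exactly for the identifications forced by $C$.

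Granting this, applying $-\otimes\Q$ to the resulting morphism of amalgamated-product diagrams $\{A(\Gamma^{(j)})\}\to\{A(\overline{\Gamma^{(j)}})\}$: its three constituent maps become isomorphisms by the induction hypothesis, and since $-\otimes\Q$ preserves the pushout the induced map $A(\Gamma)\otimes\Q\to A(\overline\Gamma)\otimes\Q$ is an isomorphism, completing the induction. I expect the main obstacle to be precisely the combinatorial lemma just sketched — that $A(\overline\Gamma)$ really is the amalgam of the $A(\overline{\Gamma^{(j)}})$ — the subtlety being that the partition of $S$ into the $S_i$ is only refined, not preserved, upon restriction to a side $V_j\cup C$, so one cannot read off all the identifications of $A(\overline\Gamma)$ on either piece and must instead recover them through the overlap. (An alternative that bypasses the induction: one checks that the holonomy Lie algebra of $A(\Gamma)$ already coincides with that of the RAAG $A(\overline\Gamma)$ at the level of presentations — even labels contribute the quadratic commutator relations, odd labels nothing once their generators are identified in $H_1$ — and then a comparison of the dimensions of the lower central series quotients forces the canonical surjection $A(\Gamma)\otimes\Q\onto A(\overline\Gamma)\otimes\Q$ to be injective, the role of Theorem \ref{theo1} being played by the graded formality of right-angled Artin groups.)
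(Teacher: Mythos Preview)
Your main strategy --- induction on $|S|$ via an amalgamated--product decomposition along an $\infty$--edge, using that the Malcev completion, as a left adjoint, sends group pushouts to pushouts of pro-unipotent groups --- is a genuinely different route from the paper's. The paper argues directly at the graded Lie algebra level: after identifying $A(\overline\Gamma)\otimes\Q$ with $\exp\widehat{\mathcal X}(\overline\Gamma)$ via Proposition~\ref{prop:raags}, it reduces to showing that $\gr\varphi:\gr A(\Gamma)\otimes\Q\to\mathcal X(\overline\Gamma)$ is an isomorphism, and does so by writing down an explicit inverse $\psi$ sending $x_{S_i}$ to the class of any $s\in S_i$. The only thing to check is that $[s,t]=0$ in $\gr_2 A(\Gamma)\otimes\Q$ whenever $m_{s,t}$ is finite and even, which is exactly Proposition~\ref{prop:dihedral}. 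No induction, no amalgams, no pushouts in exotic categories.

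Your approach is valid in principle, but the step you flag as the ``main obstacle'' really is one, and your sketch does not quite close it. With $V_1=\{s_0\}$, $V_2=\{t_0\}$, a part $S_i$ of $\Gamma$ containing both $s_0$ and $t_0$ can restrict to \emph{several} parts of $\Gamma^{(1)}$ (and of $\Gamma^{(0)}$), so one must verify that the amalgamation over $A(\overline{\Gamma^{(0)}})$ re-identifies all of these and adds no extra relations among the RAAG generators; this is true (the key point being that any odd path joining $s_0$ to $t_0$ must pass through $C$, since $m_{s_0t_0}=\infty$, so the identifications can indeed be reconstructed side by side), but making it precise is at least as long as the paper's entire proof. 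What your route would buy is modularity --- a clean reduction of the general case to the free-of-infinity case --- at the cost of this combinatorial bookkeeping; the paper's route bypasses the bookkeeping entirely by working globally with the graded Lie algebra. Your parenthetical ``alternative'' (compute the holonomy Lie algebra, observe it coincides with $\mathcal X(\overline\Gamma)$, and use graded formality of RAAGs to force injectivity of the surjection on graded quotients) is, in substance, exactly the paper's argument phrased in slightly different language.
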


This completes the task of determinating the Malcev completion of Artin groups,
as the case of a RAAG is simple enough (see Proposition \ref{prop:raags} below).

Our original result then concerns the generalized braid groups attached to an arbitrary
finite complex reflection group, that is a finite subgroup $W$ of $\GL_n(\C)$
generated by complex (pseudo-)reflections. Its braid group is defined as $B = \pi_1(X/W)$,
where $X$ is the complement inside $\C^n$ of the hyperplane arrangement $\mathcal{A}$
made of the fixed point sets of the reflections. We refer to \cite{BMR} for basic results on
these groups.

 When $W$ is irreducible, it is an easy consequence of the classification of complex
 reflection groups that the number $c(W) = |\mathcal{A}/W|$ of orbits of hyperplanes
 under the natural action of $W$ is at most $3$, and that it can be equal to 3 only in rank $n = 2$. Then, one has $B^{ab} \simeq \Z^{c(W)}$ (see \cite{BMR} Theorem 2.17).
 
 The result is the following one, where $F_2$ is the free group on 2 generators.
\begin{theorem} \label{theo2}
Let $W$ be an irreducible complex reflection group. Then the natural morphism
$B\otimes \Q \onto B^{ab} \otimes \Q \simeq \Q^{c(W)}$ is an isomorphism except
if $c(W) = 3$. In this case, we have $B \simeq \Z \times F_2$ whence
$B$ is residually torsion-free nilpotent and
$B \otimes \Q \simeq \Q \times (F_2\otimes \Q)$
\end{theorem}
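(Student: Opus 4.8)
Write $X$ for the arrangement complement, so that $B=\pi_1(X/W)$, and recall the two facts quoted above: $c(W)\le 3$, with equality only when $W$ has rank $2$, and $B^{ab}\simeq\Z^{c(W)}$. The plan is to split the argument according to whether $c(W)\le 2$ or $c(W)=3$: in the first case I show that the Malcev completion is already abelian, and in the second I identify $B$ itself.

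\emph{The case $c(W)\le 2$.} Here I want to argue that $X/W$ is formal and then read off $B\otimes\Q$ from its low–degree rational cohomology. Since $W$ is finite and we work over $\Q$ one has $H^{*}(X/W;\Q)=H^{*}(X;\Q)^{W}$; as the Orlik--Solomon algebra is a $W$-equivariant formal model of $X$ and taking $W$-invariants (with $|W|$ invertible) preserves quasi-isomorphisms of commutative dg algebras, $X/W$ is formal, hence $1$-formal. Consequently $B\otimes\Q$ is the Malcev group attached to the degree completion of the holonomy Lie algebra of $X/W$, and the natural morphism $B\otimes\Q\onto B^{ab}\otimes\Q$ is an isomorphism exactly when that Lie algebra is abelian, i.e. when the cup product $\Lambda^{2}H^{1}(X/W;\Q)\to H^{2}(X/W;\Q)$ is injective. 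For $c(W)=1$ there is nothing to prove, since $\Lambda^{2}H^{1}=0$. For $c(W)=2$, $\Lambda^{2}H^{1}$ is one–dimensional, spanned by $\bar\omega_{1}\wedge\bar\omega_{2}$ with $\bar\omega_{i}=\sum_{H\in\mathcal{O}_{i}}[\dd\alpha_{H}/\alpha_{H}]$ the sum over the $i$-th orbit of hyperplanes; using Brieskorn's decomposition $H^{2}(X;\Q)=\bigoplus_{Z}H^{2}(X_{Z})$ over the codimension-two flats $Z$, I pick $Z=H\cap H'$ with $H\in\mathcal{O}_{1}$, $H'\in\mathcal{O}_{2}$ and check that the $Z$-component of $\bar\omega_{1}\cup\bar\omega_{2}$ is the sum of the $\omega_{H_i}\omega_{H_j}$ over $H_i\in\mathcal{O}_{1}\cap\mathcal{A}_{Z}$, $H_j\in\mathcal{O}_{2}\cap\mathcal{A}_{Z}$, which is nonzero in $H^{2}(X_{Z})$ by an elementary computation in the Orlik--Solomon algebra of the rank-two subarrangement $\mathcal{A}_{Z}$. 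This gives $B\otimes\Q\cong\Q^{c(W)}\cong B^{ab}\otimes\Q$, as in Theorem~\ref{theo1}.

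\emph{The case $c(W)=3$.} Now $W$ has rank $2$, so $X\subset\C^{2}\setminus\{0\}$ is the total space of the tautological $\C^{*}$-bundle restricted over $\mathbb{P}(X)=\mathbb{P}^{1}\setminus\{\text{projectivised reflecting lines}\}$, and $W$ acts on this bundle covering the action of $\bar W=W/(W\cap\C^{*})\subset\mathrm{PGL}_{2}(\C)$ on the base. Because $c(W)=3$, $\bar W$ has at least three orbits on $\mathbb{P}^{1}$ with nontrivial stabiliser; by Klein's classification of finite subgroups of $\mathrm{PGL}_{2}(\C)$ it has at most three such orbits; hence exactly three, and since every reflecting line is a fixed point of the nontrivial image of a reflection, those three exceptional orbits are precisely the three $\bar W$-orbits of reflecting lines. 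Therefore $\bar W$ acts freely on $\mathbb{P}(X)$, the bundle descends to a $\C^{*}$-bundle $X/W\to\mathbb{P}(X)/\bar W$, and its base is $\mathbb{P}^{1}$ with the $c(W)=3$ images of the hyperplane orbits removed — a space homotopy equivalent to a wedge of two circles. A $\C^{*}$-bundle over a one-dimensional CW complex is trivial, so $X/W\simeq\C^{*}\times(\mathbb{P}^{1}\setminus\{3\text{ points}\})$ and $B\cong\Z\times F_{2}$. Since $\Z$ and $F_{2}$ are residually torsion-free nilpotent and Malcev completion carries a finite direct product to the direct product of the completions, $B$ is residually torsion-free nilpotent and $B\otimes\Q\cong\Q\times(F_{2}\otimes\Q)$, which indeed differs from $B^{ab}\otimes\Q\cong\Q^{3}$.

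\emph{Main obstacle.} The delicate point is the nonvanishing of $\bar\omega_{1}\cup\bar\omega_{2}$ when $c(W)=2$: one must make sure the $W$-symmetrisation does not cause cancellation, which is exactly why it is cleaner to localise at a single codimension-two flat via Brieskorn's decomposition and reduce to a bare-hands computation for a central line arrangement in $\C^{2}$. A lesser point is the formality of $X/W$ for the non-real $W$, which nonetheless follows formally from $W$-equivariant formality of $X$ together with exactness of $(-)^{W}$ in characteristic zero. The case $c(W)=3$ is, by contrast, entirely uniform once the $\mathrm{PGL}_{2}(\C)$ classification is granted, and does not even go through the Malcev completion.
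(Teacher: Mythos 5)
Your argument is correct in substance, but it follows a genuinely different route from the paper. For the cases with $c(W)\le 2$ the paper stays group-theoretic and case-based: real groups and Shephard groups (and $G_{13}$, via Bannai) are handled by Theorem \ref{theo1} since their braid groups are Artin groups; $c(W)=1$ is immediate because $\gr B$ is generated in degree one; and for $G(de,e,n)$ with $d>1$, $n\ge 3$ one uses a splitting of $B\onto B^{ab}$ from the Brou\'e--Malle--Rouquier presentations together with the Stallings--Stammbach five-term sequence to kill $C^2B/C^3B$ rationally. Your proof replaces all of this by a single rational-homotopy argument: formality of $X/W$ (Brieskorn's $W$-equivariant quasi-isomorphism, invariants under a finite group in characteristic zero, descent of formality to $\Q$), the theorem that for a $1$-formal space the Malcev Lie algebra of $\pi_1$ is the completed holonomy Lie algebra, and then injectivity of the cup product $\Lambda^2H^1(X/W;\Q)\to H^2(X/W;\Q)$, which you verify by localizing at one rank-two flat through hyperplanes of the two orbits; that localized computation is indeed elementary and correct (for disjoint nonempty sets $A,B$ of hyperplanes through $Z$, the product of the two orbit sums is nonzero in the rank-two Orlik--Solomon algebra), though you should write it out. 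What your approach buys is uniformity -- no browsing of the Shephard--Todd classification beyond the facts $c(W)\le 3$ and rank $2$ when $c(W)=3$ -- at the price of invoking nontrivial black boxes (formality, Sullivan/Morgan--type identification of the Malcev Lie algebra) that the paper deliberately avoids. For $c(W)=3$ the paper simply cites Bannai's theorem that $B\simeq\Z\times F_2$, whereas you reprove it geometrically via Klein's classification and the triviality of the $\C^*$-bundle $X/W\to \mathbb{P}(X)/\overline{W}$ over a punctured sphere; this is essentially Bannai's argument and is fine, provided you also record the trivial converse in the statement ($B\simeq\Z\times F_2\Rightarrow B^{ab}\simeq\Z^3\Rightarrow c(W)=3$) and note explicitly that $F_2\otimes\Q$ is nonabelian, so that in this case the natural map $B\otimes\Q\onto B^{ab}\otimes\Q\simeq\Q^3$ genuinely fails to be an isomorphism.
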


\section{Preliminaries on the Malcev completion}
\label{sect:prelims}

Let $G$ be a group. For $x,y \in G$ we set $(x,y) = xyx^{-1}y^{-1}$ and,
for $H_1,H_2< G$ two subgroups of $G$ we denote $(H_1,H_2)$ the subgroup generated
by the $(x,y)$ for $x \in H_1$, $y \in H_2$. The lower central series is defined by the sequence $C^1 G = G$, $C^{n+1} G = (G,C^n G)$. The commutator map $G \times G  \to G$
given by $(x,y)$ induces a Lie algebra structure on the graded
$\Z$-module $\gr G = \bigoplus_{n=1}^{\infty} C^n G/C^{n+1} G$. One of its
main properties is that it is generated as a Lie algebra by $\gr_1 G = G/(G,G) = G^{ab}$
(see \cite{BOURBLIE23} ch. 2 \S 4).

The lower central series without torsion is defined by
$TC^n G = \{ g \in G \ | \ \exists m \neq 0 \ g^m \in C^n G\}$. Let $I$ be the
augmentation ideal of the group algebra $\Q G$, that is the kernel of the
augmentation map $\Q G \to \Q$ mapping each $g \in G$ to $1$. Then $TC^n G$
is equal to the kernel of the natural map $G \to \Q G/I^{n+1}$ (see \cite{JENNINGS,QUILLENGRADED})
and the Malcev completion of $G$ is $G \otimes \Q = \varprojlim G/TC^n G \otimes \Q$ where
$G/TC^n G \otimes \Q$ is the original Malcev completion of the torsion-free nilpotent group
$G/TC^n G$ as in \cite{MALTSEVORIG}.

We consider the case of so-called right-angled Artin groups (RAAG), namely the
case where $m_{s,t} \in \{ 2, \infty\}$ for each $s,t \in S$. In this case,
we define $\mathcal{X}(\Gamma)$ to be the graded Lie algebra over $\Q$ with generators
$x_s, \in S$ and relations $[x_s,x_t]= 0$ if $m_{s,t}=2$, and denote $\widehat{\mathcal{X}}(\Gamma)$ its completion with respect to the grading. It is easy to see that
the envelopping algebra of $\mathcal{X}(\Gamma)$ can be identified with the
(graded) unital associative algebra $\mathcal{A}(\Gamma)$
with generators the $x_s, s \in S$ and
relations $x_sx_t=x_tx_s$ when $m_{s,t}=2$ (which is actually the monoid algebra
of the corresponding Artin monoid). The latter is therefore a Hopf algebra with coproduct $\Delta(x_s) = x_s \otimes 1 + 1 \otimes x_s$ for $s \in S$, and $\exp \widehat{\mathcal{X}}(\Gamma)$ can be identified with the grouplike elements of $\widehat{\mathcal{A}}(\Gamma)$
(see \cite{QUILLENRATHOM}) where $\widehat{\mathcal{A}}(\Gamma)$ is the completion of $\mathcal{A}(\Gamma)$ w.r.t. the grading. A similar statement can be found in \cite{KAPO}.

\begin{proposition}\label{prop:raags} Let $A(\Gamma)$ be a RAAG. Then $A(\Gamma) \otimes \Q \simeq \exp \widehat{\mathcal{X}}(\Gamma)$.
\end{proposition}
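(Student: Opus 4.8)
The plan is to identify $A(\Gamma)\otimes\Q$ with the group of grouplike elements of the complete Hopf algebra $\widehat{\mathcal A}(\Gamma)$, using the general principle (due to Quillen) that for a finitely generated group $G$, the Malcev completion is recovered from the $I$-adic completion of $\Q G$: one has $\varprojlim \Q G/I^{n+1}$ a complete Hopf algebra whose grouplike elements form $G\otimes\Q$, and whose primitive elements form the Malcev Lie algebra $\Lie(G\otimes\Q)$. So the first step is to compute $\Q A(\Gamma)$ together with its $I$-adic filtration. I would use the fact, recalled in the excerpt, that $\mathcal A(\Gamma)$ is the monoid algebra of the Artin monoid $A^+(\Gamma)$, and that by the standard theory of RAAGs (e.g.\ Duchamp--Krob, or the fact that $A(\Gamma)$ has a finite Salvetti-type $K(\pi,1)$ with polynomial Poincaré series matching $\mathcal A(\Gamma)$) the associated graded of $\Q A(\Gamma)$ with respect to the $I$-adic filtration is precisely the graded algebra $\mathcal A(\Gamma)$ with $x_s$ the class of $s-1$.

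Concretely, I would argue as follows. Map $s\mapsto 1+x_s\in\widehat{\mathcal A}(\Gamma)$; since $1+x_s$ is invertible (inverse $1-x_s+x_s^2-\cdots$) and the braid relations $x_sx_t=x_tx_s$ force $(1+x_s)(1+x_t)=(1+x_t)(1+x_s)$, this extends to an algebra map $\Q A(\Gamma)\to\widehat{\mathcal A}(\Gamma)$ sending $I^n$ into the degree-$\ge n$ part, hence a filtered map $\varprojlim\Q A(\Gamma)/I^{n+1}\to\widehat{\mathcal A}(\Gamma)$. The key step is that this map is an isomorphism of complete Hopf algebras: surjectivity is clear since the $x_s$ are in the image, and injectivity follows by comparing associated graded pieces — on the associated graded it is the natural surjection $\gr_I\Q A(\Gamma)\onto\mathcal A(\Gamma)$, which is an isomorphism because both sides have the same (finite-dimensional, by the RAAG structure) graded pieces in each degree. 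Granting this isomorphism, $A(\Gamma)\otimes\Q$ equals the grouplike elements of $\widehat{\mathcal A}(\Gamma)$, which by the remarks in the excerpt (and \cite{QUILLENRATHOM}) is exactly $\exp\widehat{\mathcal X}(\Gamma)$, since $\mathcal A(\Gamma)=U(\mathcal X(\Gamma))$ as graded Hopf algebras and grouplike elements of the completed enveloping algebra are the exponentials of primitives.

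The main obstacle is establishing that $\gr_I\Q A(\Gamma)\xrightarrow{\sim}\mathcal A(\Gamma)$, i.e.\ that the $I$-adic associated graded of the RAAG group algebra is the monoid algebra with no extra relations: one must know that the Poincaré series of $\mathcal A(\Gamma)$ (which for a RAAG is the ``clique polynomial'' inverse $1/\kappa_\Gamma(-t)$) coincides with $\sum_n \dim_\Q(I^n/I^{n+1})\,t^n$. Equivalently, one needs $A(\Gamma)$ to be a \emph{Mal'cev-finite} / filtered-formal group with $\gr A(\Gamma)\otimes\Q\cong\mathcal X(\Gamma)$; this is classical for RAAGs (Kim--Roush, or via the Cohen--Suciu computation of the holonomy Lie algebra of the arrangement complement, which for a RAAG is a $K(\pi,1)$ whose holonomy Lie algebra is $\mathcal X(\Gamma)$ and which is a formal space, so lower central series ranks agree with $\mathcal X(\Gamma)$). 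I would cite this rather than reprove it. Once the graded identification is in hand, everything else — that exponentials of primitives are grouplike, that $\exp$ is a bijection onto grouplikes in a complete cocommutative Hopf algebra of characteristic zero, and that primitives of $\widehat{\mathcal A}(\Gamma)$ are $\widehat{\mathcal X}(\Gamma)$ — is standard Milnor--Moore / Quillen theory and can be dispatched in a sentence or two.
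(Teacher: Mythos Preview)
Your overall strategy via Quillen's complete Hopf algebra picture is sound, but there is a genuine gap: the map $s\mapsto 1+x_s$ is \emph{not} a morphism of Hopf algebras. In $\widehat{\mathcal A}(\Gamma)$ one has $\Delta(1+x_s)=1\otimes 1+x_s\otimes 1+1\otimes x_s$, whereas the image of a grouplike element must satisfy $\Delta(g)=g\otimes g$; the two differ by $x_s\otimes x_s$. Thus your filtered algebra isomorphism $\widehat{\Q A(\Gamma)}\to\widehat{\mathcal A}(\Gamma)$ does not carry the grouplikes of the left side (which are indeed $A(\Gamma)\otimes\Q$) to the grouplikes of the right side (which are $\exp\widehat{\mathcal X}(\Gamma)$), and the sentence ``Granting this isomorphism, $A(\Gamma)\otimes\Q$ equals the grouplike elements of $\widehat{\mathcal A}(\Gamma)$'' does not follow. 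The fix is immediate: replace $1+x_s$ by $\exp(x_s)$, which \emph{is} grouplike since $x_s$ is primitive; the induced map on associated graded is unchanged (as $\exp(x_s)-1\equiv x_s$ modulo degree~$\ge 2$), so your injectivity argument via comparison of graded pieces survives intact.

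Even after this repair, your route differs from the paper's. You prove that the single map $A(\Gamma)\otimes\Q\to\exp\widehat{\mathcal X}(\Gamma)$ is an isomorphism by importing the external fact $\gr_I\Q A(\Gamma)\cong\mathcal A(\Gamma)$ (equivalently $\gr A(\Gamma)\otimes\Q\cong\mathcal X(\Gamma)$), which you attribute to Duchamp--Krob or to formality of the RAAG arrangement complement. The paper avoids any such citation: it builds the inverse map explicitly, via the Lie algebra morphism $\mathcal X(\Gamma)\to\Lie(A(\Gamma)\otimes\Q)$ sending $x_s\mapsto\log(s)$, and then checks that each composite is the identity on the degree-$1$ generators, hence the identity everywhere because both graded Lie algebras are generated in degree~$1$. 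This is more self-contained---the isomorphism $\gr A(\Gamma)\otimes\Q\cong\mathcal X(\Gamma)$ is a \emph{byproduct} rather than an input---while your approach has the merit of situating the result squarely inside the Quillen/Milnor--Moore formalism.
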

\begin{proof}
From the presentation of $A(\Gamma)$ we get that there is a well-defined morphism
mapping each $s \in S$ to $\exp(x_s)$, as $st=ts \Rightarrow x_s = x_t \Rightarrow\exp(x_s) \exp(x_t) = \exp(x_t)\exp(x_s) = \exp(x_s+x_t)$. The augmentation ideal of $A(\Gamma)$ is mapped
to elements of valuation at least $1$ inside $\widehat{\mathcal{A}(\Gamma)}$,
so that this morphism extends to a morphism $A(\Gamma) \otimes \Q \to \exp \widehat{\mathcal{X}}(\Gamma)$. Conversely, we can define similarly a Lie algebra morphism $\mathcal{X}(\Gamma)
\to \Lie(A(\Gamma) \otimes \Q)$, where $A(\Gamma) \otimes \Q$ is endowed with a structure
of pro-unipotent group as in e.g. \cite{QUILLENGRADED} and $ \Lie(A(\Gamma) \otimes \Q) = \log(A(\Gamma) \otimes \Q)$
is its Lie algebra. Indeed, when $m_{s,t} =2$ we have $st=ts$ hence
$\log(s)\log(t) = \log(t)\log(s)$ inside $\Lie(A(\Gamma) \otimes \Q)$, so that
mapping each $x_s$ to $\log(s)$ for $s \in S$ defines a Lie algebra morphism $\mathcal{X}(\Gamma)
\to \Lie(A(\Gamma) \otimes \Q)$, which can be extended to its completion
$\widehat{\mathcal{X}}(\Gamma)
\to \Lie(A(\Gamma) \otimes \Q)$. This provides a group homomorphism
$\exp \widehat{\mathcal{X}}(\Gamma)
\to A(\Gamma) \otimes \Q)$.

We want to prove that these provide converse isomorphisms. This is equivalent to
considering the corresponding morphisms of Lie algebras between $\Lie A(\Gamma) \otimes \Q$
and $\widehat{\mathcal{X}(\Gamma)}$. One then proves that the composed maps are automorphisms
of $\Lie A(\Gamma) \otimes \Q$
and $\widehat{\mathcal{X}(\Gamma)}$, respectively. But in order to check this it is enough
to check that we get the induced graded morphisms are
isomorphisms of $\gr \Lie A(\Gamma) \otimes \Q \simeq (\gr A(\Gamma)) \otimes \Q$
and $\gr \widehat{\mathcal{X}(\Gamma)} \simeq \mathcal{X}(\Gamma)$, respectively.
But since both graded Lie algebras are generated by their homogeneous
components of degree $1$, it is enough to check that these morphisms are
the identity in degree $1$. This is immediate on each generator, and this concludes the proof.
\end{proof}

\section{Lower central series of dihedral Artin groups}

We consider the case $S = \{ a_0, a_1 \}$
and assume that $e = m_{a_0,a_1}$ is even. Then $B = \langle a_0,a_1 \ | \ (a_0a_1)^{e/2}=(a_1a_0)^{e/2} \rangle$ and  we can already notice that
$B^{ab} = \langle a_0,a_1 \ | \ a_0a_1 = a_1 a_0 \rangle$ is also a dihedral Artin group with $e = 2$. 
The goal of this Section is to prove the following Proposition.

\begin{proposition} \label{prop:dihedral} Let $B$ be an Artin group of dihedral type
$I_2(e)$ with $e$ even. Then $C^2 B / C^3 B \simeq \Z/(e/2)\Z$.

\end{proposition}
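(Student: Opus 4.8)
The plan is to realize $B/C^3 B$ explicitly as a quotient of the free $2$-step nilpotent group of rank $2$ and to read off $C^2 B/C^3 B$ from there. Write $k = e/2$, let $F = \langle a_0,a_1\rangle$ be free of rank $2$, and present $B$ as $F/N$ where $N$ is the normal closure of the single relator $r = (a_0a_1)^k(a_1a_0)^{-k}$. For any surjection $F\onto G$ the image of $C^n F$ is $C^n G$; in particular $C^3 B = C^3 F\cdot N/N$, so $B/C^3 B = F/(C^3 F\cdot N)$. Thus $B/C^3 B$ is the quotient of $H := F/C^3 F$ — the free nilpotent group of class $2$ on $a_0,a_1$ — by the image $\overline N$ of $N$, which is the normal closure in $H$ of the image of $r$.

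Everything therefore reduces to computing the image of $r$ in $H$. In $F$ one has the identity $a_0a_1 = (a_0,a_1)\,a_1a_0$, where $(x,y)=xyx^{-1}y^{-1}$ as in Section~\ref{sect:prelims}; moreover $(a_0,a_1)\in C^2 H$ is central in $H$. Hence in $H$ we get $(a_0a_1)^k = ((a_0,a_1)\,a_1a_0)^k = (a_0,a_1)^k\,(a_1a_0)^k$, and so $r$ maps to $(a_0,a_1)^k$. (Alternatively one may argue in $\Z F$ with the augmentation-ideal filtration: setting $A_i = a_i-1$, a direct expansion gives $r - 1 \equiv k(A_0A_1 - A_1A_0)\pmod{I^3}$, exhibiting $(a_0,a_1)^k$ as the leading term of $r$.)

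Since $(a_0,a_1)^k$ is central, $\overline N$ is simply the cyclic subgroup $\langle(a_0,a_1)^k\rangle$ of $C^2 H$, so $B/C^3 B \cong H/\langle(a_0,a_1)^k\rangle$. As $C^3(B/C^3 B)$ is trivial, $C^2 B/C^3 B = C^2(B/C^3 B)$, and by the same "image of $C^2$" principle applied to $H\onto B/C^3 B$ this equals the image of $C^2 H = C^2 F/C^3 F$. Now $C^2 F/C^3 F$ is infinite cyclic, generated by $(a_0,a_1)$; hence its image modulo $\langle(a_0,a_1)^k\rangle$ is $\Z/k\Z = \Z/(e/2)\Z$, which proves the proposition.

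The only point genuinely requiring care is the identification of the image of $r$ in $H$; the remaining ingredients (lower central terms pass to images under surjections, a quotient of a class-$\le 2$ nilpotent group is again of class $\le 2$, and $C^2 F/C^3 F\cong\Z$) are standard, and the first follows at once from $a_0a_1 = (a_0,a_1)\,a_1a_0$ together with the centrality of $C^2 H$. Note the argument is entirely integral — no passage to $\otimes\,\Q$ is needed — and it leaves the degree-$1$ part untouched, consistently with $B^{ab}\cong\Z^2$.
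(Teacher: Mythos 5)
Your proof is correct, and it takes a genuinely different, more elementary route than the paper. The paper explicitly declines the direct group-theoretic computation (remarking only that it "is probably possible") and argues homologically: it writes down the Dehornoy--Lafont order complex for the dihedral Artin monoid, lifts the projection $B \onto B^{ab}$ to a chain map, finds that the induced map $H_2(B,\Z) \to H_2(B^{ab},\Z)$ is multiplication by $e/2$, and then identifies $C^2B/C^3B$ with the cokernel of that map via the Stallings--Stammbach five-term exact sequence. You instead carry out the direct argument: presenting $B=F/N$ with $N$ the normal closure of $r=(a_0a_1)^k(a_1a_0)^{-k}$, $k=e/2$, you reduce to the free class-two nilpotent group $H=F/C^3F$, observe that centrality of $(a_0,a_1)$ in $H$ makes the image of $r$ equal to $(a_0,a_1)^k$, note that the normal closure of this central element is just $\langle(a_0,a_1)^k\rangle$, and conclude from $C^2F/C^3F\cong\Z$ (generated by $(a_0,a_1)$) that $C^2B/C^3B\cong\Z/(e/2)\Z$. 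Every step you invoke (lower central terms are carried onto lower central terms by surjections, a quotient of a class-two group has class at most two, the rank-one structure of $C^2F/C^3F$ for $F$ free of rank two) is standard and correctly used, and the identification of the image of $r$ is justified. What your approach buys is brevity and a purely integral, presentation-level computation with no homological input; what the paper's approach buys is machinery --- the explicit $\partial_2$ of the Dehornoy--Lafont complex, the comparison map of resolutions, and the Stallings--Stammbach technique --- which the paper reuses later (in the proof of Theorem \ref{theo2}, for the groups $G(de,e,n)$ with $n\geq 3$, $d>1$) where no such convenient one-relator presentation is available.
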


It is probably possible to prove this proposition by a direct group-theoretic argument. We
prefer an homological approach.
We consider the Dehornoy-Lafont Order Complex $(D_{\bullet} B, \partial_{\bullet})$ for these groups
attached to the corresponding Artin monoids, with ordering $a_0<a_1$ on the atoms.
We refer to \cite{DEHORNOYLAFONT} for its definition. We have by construction $D_k B = 0$ for $k > 2$ and
$D_0 B$, $D_1 B$ and $D_2 B$ are free $\Z B$-modules with bases $\{ [\emptyset] \}$,
$\{ [a_0],[a_1] \}$ and $\{ [a_0,a_1] \}$, respectively. From the description in \cite{DEHORNOYLAFONT}
it is immediate that $\partial_1([a_i]) = (a_i - 1)[\emptyset]$, $\partial_0([\emptyset]) = 1$
and it is straightforward to show that
$$
\begin{array}{lcl}
\partial_2([a_0,a_1]) &=& \underbrace{a_0a_1\dots a_0}_{e-1}[a_1] - \underbrace{a_1a_0\dots a_1}_{e-1}[a_0] \\
&-& [a_1] -a_1[a_0] - a_1a_0[a_1] - \dots - \underbrace{a_1a_0\dots a_0}_{e-2}[a_1]\\
&+& [a_0] +a_0[a_1] + a_0a_1[a_0] + \dots + \underbrace{a_0a_1\dots a_1}_{e-2}[a_0]\\
\end{array}
$$
In particular, for $e=2$ we have $\partial_2([a_0,a_1]) = (a_0-1)[a_1]-(a_1-1)[a_0]$.

It is already known that $H_2(B,\Z) \simeq \Z$ for every even $e \geq 2$ (see \cite{SALVETTI94}), with
basis the class of $[a_0,a_1]$. We wish
to compute the morphism $H_2(B,\Z) \to H_2(B^{ab},\Z)$ induced by $\varphi$. For this we use
the standard method of see e.g. \cite{BROWN} p. 48 to consider the acyclic complex $(D_{\bullet}(B^{ab}),\partial^{ab})$
as a complex of $\Z M$-modules via the morphism $\varphi :B \to B^{ab}$, and
construct a morphism of $\Z M$-complexes $f : D_{\bullet}(B) \to D_{\bullet}(B^{ab})$.
Since $D_{\bullet}(B)$ is a complex of projective modules and $D_{\bullet}(B^{ab})$
is acyclic one knows that such a morphism exists and is unique up to homotopy.
Since each $f_i$ is a morphism of $\Z B$-modules one needs to specify only its
values on the chosen basis of $D_i(M)$.
One takes obvisouly $f_0([\emptyset]) =  [\emptyset]$
and $f_1([a_i]) = [a_i]$. Then, one needs to find $x \in \Z B^{ab}$ such that setting
$f_2([a_0,a_1]) = x [a_0,a_1]$ we have $\partial_2^{ab} (f_2([a_0,a_1]))= f_1(\partial_2([a_0,a_1]))$,
that is $x \partial_2^{ab} [a_0,a_1] = f_1(\partial_2([a_0,a_1]))$. Applying $f_1$ to the formula
above we get
$$
f_1(\partial_2([a_0,a_1])) = 
\left( \sum_{k=0}^{\frac{e-2}{2}} (a_0a_1)^k \right)([a_0] - [a_1]+a_0[a_1]-a_1[a_0])
$$
so that $x = \sum_{k=0}^{\frac{e-2}{2}} (a_0a_1)^k$. Since $D_i(B) = 0$ for $i \geq 3$
this concludes the description of the morphism. In order to compute $H_2(f_i,\Z)$,
we apply the functor $\bullet \otimes_{\Z B} \Z$ to this morphism of complexes, and get the
following diagram

\begin{center}

\begin{tikzcd}
\dots \arrow[r] & 0 \arrow[r]\arrow[d] & \Z \arrow["0", r]\arrow["e/2", d] & \Z \oplus \Z \arrow["0",r]\arrow["\Id",d] & \Z \arrow["1", r]\arrow["1",d] & \Z \arrow[d, equal] \\
\dots \arrow[r] & 0 \arrow[r] & \Z \arrow["0",r] & \Z \oplus \Z \arrow["0",r] & \Z \arrow["1",r] & \Z  \\
\end{tikzcd}

\end{center}
so that the induced morphism $H_2(B,\Z) \to H_2(B^{ab},\Z)$
is multiplication by $e/2$. 

The Stallings-Stammbach exact sequence of \cite{STALLINGS,STAMMBACH} attached to the short exact sequence $1 \to C^2 B \to
B \to B^{ab} \to 1$ then reads
\begin{center}
\begin{tikzcd}
H_2(B,\Z) \arrow[r, "e/2"] \arrow[d, equal] & H_2(B^{ab},\Z)\arrow[d, equal] \arrow[r] &
\frac{C^2 B}{C^3 B}\arrow[r] & H_1(B,\Z) \arrow[d, equal] \arrow[r, "\simeq"] & H_1(B^{ab},\Z) \arrow[d,equal] \\
\Z & \Z & & \Z^2 & \Z^2 
\end{tikzcd}
\end{center}
so that $C^2 B/C^3 B \simeq \Z/(e/2)\Z$.
This concludes the proof of the Proposition.

\section{Proofs of the main results}

We can now prove Theorem \ref{theo1}. Since $\gr B$ is generated as a Lie algebra by $\gr_1 B = B^{ab}$,
we can take for generators the image of an arbitrary choice of elements $a_i \in S_i$,
and need to prove that $[a_i,a_j]= 0$ inside $\Q \otimes \gr_2 B$ for all $i,j$. Indeed, if we can do that,
then $\Q \otimes \gr B $ is a commutative Lie algebra over $\Q$ generated by $\gr_1 B$, so that
$\Q \otimes \gr B =\Q \otimes  \gr_1 B = \Q \otimes \gr B^{ab} $ and
$\gr_n B \otimes \Q = 0$ for every $n \geq 2$. Since $\gr B$ is generated by $\gr_1 B \simeq \Z^r$ it follows that each $\gr_n B$ is finitely generated as a $\Z$-module, hence
$\gr_n B$ is finite for each $n \geq 2$. But this implies for $n \geq 2$ that, for each $x \in TC^{n} G$,
we have $x^m \in C^{n} G$ for some $m \neq 0$, and then $(x^m)^N \in C^{n+1} G$ for $N = |\gr_n B|$, so that $x \in TC^{n+1} G$ and the sequence $TC^n G$ is stationnary. It follows
that 
$$
B = \varprojlim B/TC^n B \otimes \Q = B/TC^2 B \otimes \Q = B^{ab} \otimes \Q
$$
and this will prove Theorem \ref{theo1}.

So let us consider a pair $1 \leq i,j \leq r$ with $i\neq j$. By assumption, we have
that $e = m_{a_i,a_j}$ is even. Consider the subgraph $\Gamma_0$ of $\Gamma$
with vertices $a_i,a_j$. We have a natural homomorphism $B_0 = A(\Gamma_0) \to
A(\Gamma) = B$ mapping each $a_k$ to itself for $k=i,j$. It
induces a Lie algebra homomorphism $\Q \otimes \gr B_0  \to
\Q \otimes \gr B$. Since $[a_i,a_j] = 0$ inside $\Q \otimes \gr B_0$
by Proposition \ref{prop:dihedral}, we get that $[a_i,a_j] = 0$ inside $\Q \otimes \gr B$ and this concludes
the proof of Theorem \ref{theo1}.

For the proof of Theorem \ref{theo1b}, we use the isomorphism $A(\overline{\Gamma}) \otimes \Q
\simeq \exp \widehat{\mathcal{X}}(\overline{\Gamma})$ of Proposition \ref{prop:raags}.
Composing it with the natural morphism $A(\Gamma) \otimes \Q \to A(\overline{\Gamma}) \otimes \Q$
we get an homomorphism of pro-unipotent groups $\Phi : A(\Gamma)\otimes \Q \to \exp \widehat{\mathcal{X}}(\overline{\Gamma})$. In order to get the conclusion, we prove
that $\Phi$ is an isomorphism. For this it is sufficient to prove that the induced
morphism of Lie algebras $\varphi : \Lie (A(\Gamma) \otimes \Q) \to 
\widehat{\mathcal{X}}(\overline{\Gamma})$ is an isomorphism, and for this it is
sufficient to prove that the associated morphism between graded algebras
$\gr \varphi : \gr A(\Gamma) \otimes \Q \to \mathcal{X}(\overline{\Gamma})$ is an isomorphism.
We define a morphism of Lie algebras $\psi : \mathcal{X}(\overline{\Gamma}) \to \gr A(\Gamma)
\otimes \Q$ by mapping $x_{S_i}$ to the class of an arbitrary $s \in S_i$ inside
$\gr_1 A(\Gamma) \otimes \Q = A(\Gamma)^{ab}$. One needs to check for $i \neq j$ that, if
there exists $s \in S_i$ and $t \in S_j$ with $m_{s,t}< \infty$, then
$[s,t] = 0$ inside $\gr_1 A(\Gamma) \otimes \Q$, which we already checked using Proposition \ref{prop:dihedral}. Therefore $\psi$ is well-defined and $\psi\circ \gr \varphi$
is a Lie endomorphism of $\gr B \otimes \Q$ which maps each generator to itself.
It follows that $\psi \circ \gr \varphi$ is the identity, whence $\gr \varphi$
is injective. Since its image contains a generating set of $\mathcal{X}(\Gamma)$
it is surjective, so it is indeed an isomorphism, and this completes the proof of
Theorem \ref{theo1b}.

\bigskip

We finally prove Theorem \ref{theo2}, and refer to \cite{LEHRERTAYLOR} for general
results on irreducible complex reflection groups, including their Shephard-Todd classification into a general
series $G(de,e,n)$ depending on 3 integral parameters $d,e,n$ and the list $G_4,G_5,\dots G_{37}$
of exceptional groups.

 If $B \simeq \Z \times F_2$, then $B^{ab} \simeq \Z^3$, so that $c(W)=|\mathcal{A}/W| = 3$. Assume conversely that $c(W) = 3$. 
By the classification of irreducible complex reflection groups,
we have that either $W = G(de,e,2)$ for some $d > 1$ and $e$ even, or $W \in \{ G_7,G_{11},G_{19},
G_{15} \}$. In all these cases, it is known by  \cite{BANNAI} that the
corresponding braid group is isomorphic to $\Z \times F_2$, which is residually
torsion-free nilpotent, as all free groups are so.
In order to conclude the proof of Theorem \ref{theo2}, it thus remains to prove
that $B \otimes \Q \simeq B^{ab}\otimes\Q$ for all the other groups. 

If $W$ is a real
reflection group, this is known by Theorem \ref{theo1}. This is true
more generally for groups for which $B$ is an Artin group,
such as for instance the so-called Shephard groups studied in \cite{ORLIKSOLOMON}.
These Shephard groups cover
the exceptional groups $G_{25},G_{26},G_{32}$, as well as most of the
exceptional groups in dimension $2$. The remaining ones in dimension $2$ are then $G_{12},G_{13}$
and $G_{22}$. But the braid group of $G_{13}$ has been proved in \cite{BANNAI} to be
isomorphic to an Artin group, so this case is settled as well.

 The statement is also true for the groups
such that $c(W) = 1$, because in that case $gr B = \gr_1 B = \Z$, as $\gr B$
is generated as a Lie algebra by $\gr_1 B$. This covers $G_{12}$ and $G_{22}$, as well as the
remaining exceptional groups of rank at least $3$. This also covers the groups $G(e,e,n)$ for
$n \geq 3$ -- they are already covered when $n=2$, because in this case they
are real (dihedral) reflection groups in disguise.

Browsing the classification, the only
groups remaining to be considered are the groups $G(de,e,n)$ for $n \geq 3$
and $d > 1$. But in these cases, one gets immediately from the presentations
obtained in \cite{BMR} that the projection map $B \onto B^{ab} = \Z^2$ splits
(take the subgroup $\langle s, t_2 \rangle \simeq \Z^2$ from Table 1 there), so that
the induced map $H_2(B,\Z) \to H_2(B^{ab},\Z)$ admits a section and is therefore surjective.
Then, the Stallings-Stammbach exact sequence is
\begin{center}
\begin{tikzcd}
H_2(B,\Z) \arrow[r,  twoheadrightarrow]  & H_2(B^{ab},\Z) \arrow[r] &
\frac{C^2 B}{C^3 B}\arrow[r] & H_1(B,\Z) \arrow[r, "\simeq"] & H_1(B^{ab},\Z)  \\
\end{tikzcd}
\end{center}
so that $C^2 B/C^3 B = 0$ hence $\gr_2 B = 0$ and we conclude as before that $B \otimes \Q \simeq B^{ab}\otimes \Q$. This concludes the proof of the Theorem.

\end{document}